\newtheorem{thm}{Theorem}[section]
\newtheorem{cor}[thm]{Corollary}
\newtheorem{lem}[thm]{Lemma}
\newtheorem{pro}[thm]{Proposition}
\theoremstyle{definition}
\newtheorem{definition}[thm]{Definition}
\newtheorem{remark}{Remark}
\numberwithin{equation}{section}
\begin{document}

\title[Fundamental domain of invariant sets]{Fundamental domain of invariant sets and applications}

\author[Pengfei Zhang]{Pengfei Zhang}
\email{pfzh311@gmail.com}

\address{School of Mathematical Sciences, Peking University,
Beijing 100871, China}

\subjclass[2000]{Primary 37D30; Secondary 37D20, 37B40}

\keywords{Fundamental domain, partial hyperbolicity, cocycle,
transitivity, accessibility, essential saturation.}

\thanks{The work is partially supported by NNSF project (11071231).}

\begin{abstract}
Let $X$ be a compact metric space and $f:X\to X$ a homeomorphism on
$X$. We construct a fundamental domain for the set with finite peaks
for each cocycle induced by $\phi\in C(X,\mathbb{R})$. In particular
we prove that if a partially hyperbolic diffeomorphism is
accessible, then either the set with finite peaks for the Jacobian
cocycle is of full volume, or the set of transitive points is of
positive volume.
\end{abstract}

\maketitle

\section{Introduction}

In this paper we give a construction of fundament domains for some
general subsets. More precisely let $X$ be a compact metric space,
$f:X\to X$ a homeomorphism and $E$ be an $f$-invariant set. If there
is an $f$-invariant Borel map $s:E\to E$ such that
$s(x)\in\mathcal{O}(f,x)$, then the image of $s$ is called a {\it
fundamental domain} of $E$. Take the {\it North and South Poles Map}
$f:S^2\to S^2$ for example: the set $E=S^2\backslash\{N,S\}$ is
$f$-invariant and $B(S,r)\backslash fB(S,r)$ is a fundamental domain
of $E$ (for $r<1$).

In general let $\phi\in C(X,\mathbb{R})$ be a continuous function.
This induces an {\it additive cocycle} $\{\phi_n:n\in\mathbb{Z}\}$
over $(X,f)$ which is given by
\[\phi_n(x)=\begin{cases}\phi(x)+\cdots+\phi(f^{n-1}x), & n\ge0;\\
-\phi(f^{n}x)-\cdots-\phi(f^{-1}x),& n<0.\end{cases}\] Let
$\Phi_f(x)=\sup_{n\in\mathbb{Z}}\phi_n(x)$ be the peak value at $x$.
Then the cocycle $\{\phi_n\}$ is said to have {\it finite peaks} at
a point $x\in X$, if $\{n\in\mathbb{Z}:\phi_n(x)=\Phi_f(x)\}$ is
nonempty and finite. Denote by $H(f,\phi)$ the set of points with
finite peaks.

Note that for some $\phi$ related to the dynamics, $H(f,\phi)$ can
be {\it quite large} with respect to natural measures, see Section
\ref{smooth} and \cite{GO}. Also $H(f,\phi)$ could be {\it large} in
the sense of entropy, see Remark \ref{large} and Section
\ref{entropy}. We prove that there always exists a fundamental
domain for this set:

\vskip0.3cm

\noindent\textbf{Theorem A.} {\it Let $f:X\to X$ be a homeomorphism
and $\phi\in C(X,\mathbb{R})$. Then there is an $f$-invariant Borel
section $\pi:H(f,\phi)\to H(f,\phi)$. Equivalently, the image of
$\pi$ is a fundamental domain of $H(f,\phi)$. }

We also give some applications of our construction. Let $f:M\to M$
be a transitive diffeomorphism and $\mathrm{Tran}_f$ be the set of
transitive points of $f$. It is well known that $\mathrm{Tran}_f$ is
a residual subset (hence topologically large). But a residual subset
could be measure-theoretically meagre (take the set of Liouville
numbers for example).

There are some classical results about the measure-theoretical
largeness of the transitive set. For example let $f:M\to M$ be a
$C^2$ transitive Anosov diffeomorphism. Sina\v{\i} proved in
\cite{Si} that there exists a unique Gibbs measure $\mu_+$ with
respect to $f$ whose basin $B(\mu_+,f)$ is of full volume:
$m(B(\mu_+,f))=1$. In particular $B(\mu_+,f)\subset \mathrm{Tran}_f$
since the support $\mathrm{supp}(\mu_+)=M$. So
$m(\mathrm{Tran}_f)=1$ for every $C^2$ transitive Anosov
diffeomorphism. See \cite{AB,Z} for recent results about the measure
of transitive sets for general systems. We get similar estimates of
$\mathrm{Tran}_f$ for accessible partially hyperbolic systems.
Namely let $J_f(x)$ be the Jacobian of $f$ with respect to the
Riemannian metric which induces $m$.

\vskip0.3cm

\noindent\textbf{Theorem B.} {\it Let $f:M\to M$ be a $C^2$
partially hyperbolic diffeomorphism. If $f$ is essentially
accessible and $m(H(f,\log J_f))<1$, then $f$ is transitive and
$m(\mathrm{Tran}_f)\ge 1-m(H(f,\log J_f))>0$. }

Once again let $f$ be $C^2$ transitive Anosov diffeomorphism,
$\mu_+$ (respectively, $\mu_-$) be the unique Gibbs measure with
respect to $f$ (respectively, $f^{-1}$). Denote the common measure
by $\mu$ if $\mu_+=\mu_-$. Following dichotomy is proved by Gurevich
and Oseledets \cite{GO}:
\begin{itemize}
\item either $\mu_+\neq\mu_-$: then $(m,f)$ is completely
dissipative;

\item or $\mu_+=\mu_-$: then $\mu$ is equivalent to $m$ and
$(m,f)$ is ergodic.
\end{itemize}
We also get a partial generalization of Gurevich and Oseledets
dichotomy to center bunched, essentially accessible partially
hyperbolic systems.

\vskip0.3cm

\noindent\textbf{Corollary C.} {\it Let $f:M\to M$ be a $C^2$
essentially accessible, center bunched partially hyperbolic
diffeomorphism.
\begin{enumerate}
\item Either $m(H(f,\log J_f))=1$: then $(m,f)$ is completely
dissipative;

\item or $m(H(f,\log J_f))<1$: then  $m(H(f,\log J_f))=0$ and $(m,f)$ is ergodic.
\end{enumerate}
}

Finally we give a proof of the entropy largeness of the Birkhoff
heteroclinic set $H_f(\mu,\nu)=B(\mu,f)\cap B(\nu,f^{-1})$ of two
$f$-invariant measures $\mu,\nu$, which may be of interest in its
own right. We are grateful to G. Liao for pointing out the asymmetry
of Bowen dimensional entropy $h_B(f,\cdot)$.

\vskip0.3cm

\noindent\textbf{Proposition D.} {\it Let $f:M\to M$ be a transitive
Anosov diffeomorphism. Then for all $f$-invariant measures
$\mu,\nu$, the entropy of $H_f(\mu,\nu)$ satisfies
$h_B(f,H_f(\mu,\nu))=h_\mu(f)$ and
$h_B(f^{-1},H_f(\mu,\nu))=h_\nu(f)$.

In particular if $h_\mu(f)\neq h_\nu(f)$, then
$h_B(f,H_f(\mu,\nu))\neq h_B(f^{-1},H_f(\mu,\nu))$. }

\section*{Acknowledgments}

We thank Jon Aaronson, Shaobo Gan, Gang Liao, Lan Wen, Shuyun
(Conan) Wu and Zhihong Xia for useful discussions. The author
benefited a lot from Shaobo Gan and Gang Liao's suggestions. We
thank Shuyun (Conan) Wu for sending me a copy of the paper
\cite{GO}.

\section{Fundamental domain of some invariant subsets}

Let $X$ be a compact metric space, $f:X\to X$ a homeomorphism and
$\phi\in C(X,\mathbb{R})$ be a continuous function. The induced
cocycle $\{\phi_n:n\in\mathbb{Z}\}$ over $(X,f)$ is given by
\[\phi_n(x)=\begin{cases}\phi(x)+\cdots+\phi(f^{n-1}x), & n\ge0;\\
-\phi(f^{n}x)-\cdots-\phi(f^{-1}x),& n<0.\end{cases}\] In particular
$\phi_0(x)\equiv0$, $\phi_{n+k}(x)=\phi_n(x)+\phi_k(f^nx)$ for all
$n,k\in\mathbb{Z}$ and $x\in X$.

\begin{definition}
Let $f:X\to X$ be a homeomorphism, $\phi\in C(X,\mathbb{R})$ and
$\Phi_f(x)=\sup_{n\in\mathbb{Z}}\phi_n(x)$. The cocycle $\{\phi_n\}$
is said to have {\it finite peaks} at a point $x\in X$ if
$\{n\in\mathbb{Z}:\phi_n(x)=\Phi_f(x)\}$ is nonempty and finite.
Denote by $H(f,\phi)$ the set of points with finite peaks.
\end{definition}

It is easy to see the set $H(f,\phi)$ is a Borel subset. And
$H(f,\phi)=\emptyset$ if $\phi$ is constant. In the following we
assume that $H(f,\phi)\neq\emptyset$.

\begin{definition}
The function $n_f$ of {\it last peak time} and the function $\pi$ of
{\it last peak position} on $H(f,\phi)$ are defined as:
\begin{equation}\label{npi}
n_f(x)=\sup\{n\in\mathbb{Z}:\phi_n(x)=\Phi_f(x)\},\text{ and }
\pi(x)=f^{n_f(x)}x.
\end{equation}
\end{definition}

Let $H_N=\{x\in X:\phi_{n}(x)<\Phi_f(x)\text{ for all }n\text{ with
}|n|> N \}$ for each $N\ge1$. It is clear that
$H(f,\phi)=\bigcup_{N\ge1}H_N$ and $\Phi_f(x)=\max_{|n|\le
N}\phi_n(x)$ on $H_N$. So the set $H_N$ is a $G_\delta$-subset and
the function $\Phi_f$ is continuous on $H_N$. Also note that for
each $x\in H_N$, $\phi_n(x)<\Phi_f(x)$ for all $|n|> N$ and hence
$|n_f(x)|\le N$. Moreover we have

\begin{lem}\label{borel}
Let $f$ be a homeomorphism on $X$, $\phi\in C(X,\mathbb{R})$ and
$H_N$ given as above. Then $n_f$ is upper semi-continuous on $H_N$
and the restriction $\pi|_{H_N}$ is a Borel map.
\end{lem}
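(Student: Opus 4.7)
The plan is to verify the two conclusions in order, using the two facts already recorded just before the statement of Lemma~\ref{borel}: on $H_N$ we have $|n_f(x)|\le N$, and $\Phi_f(x)=\max_{|n|\le N}\phi_n(x)$ so $\Phi_f$ is continuous on $H_N$ (being a finite max of the continuous functions $\phi_n$).

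For the upper semi-continuity of $n_f$ on $H_N$, I would argue by contradiction. Since $n_f$ is integer valued, upper semi-continuity at $x_0\in H_N$ reduces to showing that $n_f(x)\le n_f(x_0)$ for $x\in H_N$ near $x_0$. Suppose not: then there is a sequence $x_k\to x_0$ in $H_N$ with $n_f(x_k)>n_f(x_0)$. Because $|n_f(x_k)|\le N$ for all $k$, the integer values $n_f(x_k)$ take only finitely many possibilities, so after passing to a subsequence we may assume $n_f(x_k)\equiv m$ for a fixed integer $m$ with $n_f(x_0)<m\le N$. Then $\phi_m(x_k)=\Phi_f(x_k)$ for every $k$; by continuity of $\phi_m$ on $X$ and of $\Phi_f$ on $H_N$, passing to the limit yields $\phi_m(x_0)=\Phi_f(x_0)$, which places $m$ in the peak-time set of $x_0$ and contradicts the definition of $n_f(x_0)$ as the supremum of that set.

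From upper semi-continuity it follows that $n_f$ is Borel on $H_N$ (indeed the level sets $A_m:=\{x\in H_N:n_f(x)=m\}$ are differences $\{n_f\ge m\}\setminus\{n_f\ge m+1\}$ of relatively closed subsets of $H_N$). Since $|n_f|\le N$ on $H_N$, we obtain a finite Borel partition
\[
H_N=\bigsqcup_{|m|\le N}A_m,
\]
on each piece of which $\pi$ coincides with the continuous map $f^m$. Thus $\pi|_{H_N}$ is Borel as a finite patching of continuous maps on Borel sets, completing the lemma.

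The only genuine point requiring care is the upper semi-continuity step, and there the entire content is the uniform bound $|n_f|\le N$ on $H_N$, which is what makes the naive ``subsequence with constant peak time'' argument work; once that bound is invoked, continuity of $\phi_m$ and $\Phi_f$ closes the contradiction, and the Borel part of the statement is then essentially bookkeeping.
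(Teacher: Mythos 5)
Your proof is correct and follows essentially the same route as the paper: both pass to a subsequence on which $n_f(x_k)$ is a constant $m$ (using $|n_f|\le N$ on $H_N$), both invoke continuity of $\Phi_f$ on $H_N$ to derive a contradiction with $m>n_f(x_0)$, and both establish Borel measurability of $\pi|_{H_N}$ by partitioning $H_N$ into the finitely many level sets of $n_f$, on each of which $\pi$ agrees with a power of $f$. The only cosmetic difference is in the contradiction step — you pass the equality $\phi_m(x_k)=\Phi_f(x_k)$ to the limit, while the paper uses the openness of $\{\Phi_f>\phi_m\}$ — but these are two readings of the same continuity fact.
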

\begin{proof}
(1). Let $x\in H_N$ and $x_k\in H_N\to x$. Note that $|n_f(x_k)|\le
N$ for all $k\ge1$. Passing to a subsequence if necessary, we assume
$n_f(x_k)=\hat{n}$ for all $k\ge1$. Now we claim $n_f(x)\ge \hat{n}$
and hence $n_f$ is upper semi-continuous on $H_N$.

If this were not true, then $n_f(x)<\hat{n}$ and hence $\Phi_f(x)>
\phi_{\hat{n}}(x)$. Since $\Phi_f$ is continuous on $H_N$, there
exists $\delta>0$ such that $\Phi_f(y)> \phi_{\hat{n}}(y)$ for all
$y\in H_N\cap B(x,\delta)$. In particular $\Phi_f(x_k)>
\phi_{\hat{n}}(x_k)$ for all $k$ large, which contradict the
assumption $\hat{n}=n_f(x_k)$. Therefore
$\limsup_{k\to\infty}n_f(x_k)\le n_f(x)$. This finishes the proof of
the claim and hence the first conclusion.

(2). Let $H(n)=\{x\in H_N:n_f(x)=n\}$. Clearly $H(n)$ is a Borel set
and $\bigsqcup_{|n|\le N}H(n)= H_N$. Then $\pi|_{H(n)}=f^n|_{H(n)}$
is a Borel map for each $|n|\le N$. So $\pi|_{H_N}$ is also Borel.
\end{proof}

\begin{thm}\label{fund}
Let $f:X\to X$ be a homeomorphism and $\phi\in C(X,\mathbb{R})$.
Then the map $\pi$ is an $f$-invariant Borel section on $H(f,\phi)$.
Equivalently, the image of $\pi$ is a fundamental domain of
$H(f,\phi)$.
\end{thm}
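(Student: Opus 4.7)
The plan is to build the proof on a single algebraic identity for the cocycle under a shift of basepoint, together with the measurability already established in Lemma~\ref{borel}. Specifically, from the definitions one has the relation $\phi_n(fx)=\phi_{n+1}(x)-\phi(x)$ for every $n\in\mathbb{Z}$ and $x\in X$. Taking the supremum in $n$ yields $\Phi_f(fx)=\Phi_f(x)-\phi(x)$, so the peak values differ by a constant (depending only on $x$), and therefore the set of peak indices for $fx$ is exactly the set of peak indices for $x$ shifted by $-1$. This immediately gives two consequences I would use repeatedly: (a) the set $H(f,\phi)$ is $f$-invariant, since having a nonempty finite peak set is preserved under the index shift; (b) the last peak time satisfies $n_f(fx)=n_f(x)-1$.

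Using (b), the $f$-invariance of $\pi$ is a one-line check:
\begin{equation*}
\pi(fx)=f^{n_f(fx)}(fx)=f^{n_f(x)-1}(fx)=f^{n_f(x)}x=\pi(x).
\end{equation*}
By definition $\pi(x)\in\mathcal{O}(f,x)$, and by (a) $\pi(x)\in H(f,\phi)$, so $\pi$ sends $H(f,\phi)$ into itself along orbits.

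For Borel measurability, I would invoke the stratification $H(f,\phi)=\bigcup_{N\ge1}H_N$ already introduced before Lemma~\ref{borel}. Each $H_N$ is a $G_\delta$ (hence Borel) subset of $X$, and Lemma~\ref{borel} asserts that $\pi|_{H_N}$ is Borel. Consequently, for any Borel set $B\subset X$, the preimage $\pi^{-1}(B)=\bigcup_{N\ge1}(\pi|_{H_N})^{-1}(B)$ is a countable union of Borel sets, hence Borel. Thus $\pi$ is Borel on $H(f,\phi)$.

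There is no real obstacle here; the whole content is packaged inside the cocycle identity $\phi_n(fx)=\phi_{n+1}(x)-\phi(x)$ and the preparatory Lemma~\ref{borel}. The one subtle point worth articulating carefully is that the existence of a \emph{last} peak time is exactly what makes the section $\pi$ equivariant: if the peak set were infinite, the index shift induced by $f$ would have no canonical base point, and one could not hope to define an invariant section pointwise. So the proof essentially records why the definition of $H(f,\phi)$ and the choice to take the \emph{maximal} peak index fit together to produce an invariant Borel selection, after which the ``fundamental domain'' interpretation is automatic from the definition given in the Introduction.
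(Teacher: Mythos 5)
Your proof is correct and follows essentially the same route as the paper's: both hinge on the cocycle identity $\phi_{n+k}(x)=\phi_k(x)+\phi_n(f^kx)$ (you take $k=1$, the paper general $k$), deduce $\Phi_f(f^kx)=\Phi_f(x)-\phi_k(x)$ and hence $n_f(f^kx)=n_f(x)-k$, compute $\pi\circ f=\pi$, and appeal to Lemma~\ref{borel} with the stratification $H(f,\phi)=\bigcup_N H_N$ for Borel measurability.
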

\begin{proof}
Let $k\in\mathbb{Z}$. Since $\phi_{n+k}=\phi_k+\phi_n\circ f^k$ for
all $n\in\mathbb{Z}$, we see that
\begin{equation}\label{shift}
\Phi_f(x)=\sup_{n\in\mathbb{Z}}\phi_{n+k}(x)
=\sup_{n\in\mathbb{Z}}\phi_{n}(f^kx)+\phi_k(x)
=\Phi_f(f^kx)+\phi_k(x).
\end{equation}
For \eqref{shift} we see that for every $x\in H(f,\phi)$:
\begin{itemize}
\item $\phi_{n+k}(x)=\Phi_f(x)$ if and only if
$\phi_n(f^kx)=\Phi_f(f^kx)$;

\item moreover $n_f(x)=n_f(f^kx)+k$.
\end{itemize}
Then we have
\[\pi(f^kx)=f^{n_f(f^kx)}(f^kx)=f^{n_f(f^kx)+k}(x)=f^{n_f(x)}(x)=\pi(x).\]
Thus $\pi$ is $f$-invariant and $\pi(x)\in\mathcal{O}_f(x)$. By
Lemma \ref{borel} we see that $\pi:H(f,\phi)\to H(f,\phi)$ is an
$f$-invariant Borel section and its image $W=\pi(H(f,\phi))$ is a
fundamental domain of $H(f,\phi)$. This completes the proof.
\end{proof}

\section{Applications: topological systems}\label{topo}

Let $f:X\to X$ be a homeomorphism and $\mathcal{M}(f)$ the
$f$-invariant probability measures. Let
$\nu_{x,n}=\frac{1}{n}\sum_{k=0}^{n-1}\delta_{f^kx}$ be the Birkhoff
average along the orbit segment $\{x,\cdots,f^{n-1}x\}$. Then the
basin $B(\mu,f)$ of $\mu$ with respect to $f$ is defined as
$B(\mu,f)=\{x\in X:\nu_{x,n}\to\mu \text{ as }n\to+\infty\}$, which
can be viewed as the {\it Birkhoff stable set} of the measure $\mu$.
In this spirit we give the following:

\begin{definition}
Let $\mu,\nu\in\mathcal{M}(f)$ be two distinct invariant measures.
The set of the {\it Birkhoff heteroclinic}\footnote{If we denote
$B^s(\mu,f)=B(\mu,f)$ and $B^u(\nu,f)=B(\nu,f^{-1})$, then
$H_f(\mu,\nu)=B^s(\mu,f)\cap B^u(\nu,f)$.} points of the pair
($\mu,\nu$), denoted by $H_f(\mu,\nu)$, is defined as
$H_f(\mu,\nu)=B(\mu,f)\cap B(\nu,f^{-1})$.
\end{definition}
\begin{remark}\label{large}
Let $f:M\to M$ be a transitive Anosov diffeomorphism. By {\it Limit
Shadowing Property} we see that $B(\mu,f)\neq\emptyset$ for all
invariant measure $\mu$, so is $B(\nu,f^{-1})$. Also note that
$B(\mu,f)$ is saturated by stable manifolds and $B(\nu,f^{-1})$ is
saturated by unstable manifolds. Therefore $H_f(\mu,\nu)$ is dense
for all invariant measures $(\mu,\nu)$. In fact
$h_B(f,H_f(\mu,\nu))=h_\mu(f)$ where $h_B(f,E)$ be Bowen's
dimensional entropy on noncompact sets (see Section \ref{entropy}).
So there are many invariant pairs with large heteroclinic sets
$H_f(\mu,\nu)$.
\end{remark}

The following theorem provides a fundamental domain of the Birkhoff
heteroclinic set:
\begin{thm}
Let $(X,f)$ be given as above, and $\mu,\nu\in\mathcal{M}(f)$ with
$H_f(\mu,\nu)\neq\emptyset$. Then there exists an $f$-invariant,
Borel section $s$ on $H_f(\mu,\nu)$ and its image is a fundamental
domain of $H_f(\mu,\nu)$.
\end{thm}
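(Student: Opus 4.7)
The plan is to reduce this statement to Theorem \ref{fund} by producing a continuous function $\phi\in C(X,\mathbb{R})$ whose finite-peak set $H(f,\phi)$ contains $H_f(\mu,\nu)$; restricting to $H_f(\mu,\nu)$ the Borel section produced by Theorem \ref{fund} will then be the desired $s$.

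To construct $\phi$, I would exploit that $\mu\neq\nu$. The Riesz representation theorem yields some $\psi\in C(X,\mathbb{R})$ with $\int\psi\,d\mu\neq\int\psi\,d\nu$; after interchanging $\mu,\nu$ and/or replacing $\psi$ by $-\psi$, I may assume $\int\psi\,d\mu<\int\psi\,d\nu$. Choosing a real $c$ strictly between these two integrals and setting $\phi:=\psi-c$ produces a continuous function satisfying $\int\phi\,d\mu<0<\int\phi\,d\nu$.

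Next I would verify that $\phi_n(x)\to-\infty$ as $|n|\to\infty$ for every $x\in H_f(\mu,\nu)$. Since $x\in B(\mu,f)$, the definition of the basin gives $\phi_n(x)/n\to\int\phi\,d\mu<0$, so $\phi_n(x)\to-\infty$ as $n\to+\infty$. Since $x\in B(\nu,f^{-1})$, the forward Birkhoff averages of $\phi$ under $f^{-1}$ converge to $\int\phi\,d\nu>0$, whence $\sum_{k=1}^{n}\phi(f^{-k}x)\to+\infty$ and therefore $\phi_{-n}(x)=-\sum_{k=1}^{n}\phi(f^{-k}x)\to-\infty$ as $n\to+\infty$. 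Consequently the peak $\Phi_f(x)=\sup_{n\in\mathbb{Z}}\phi_n(x)$ is finite and attained on a nonempty finite subset of $\mathbb{Z}$, so $x\in H(f,\phi)$.

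By Theorem \ref{fund} there is an $f$-invariant Borel section $\pi:H(f,\phi)\to H(f,\phi)$. Because $H_f(\mu,\nu)$ is an $f$-invariant Borel subset of $H(f,\phi)$ and $\pi(x)\in\mathcal{O}_f(x)\subset H_f(\mu,\nu)$ for every $x\in H_f(\mu,\nu)$, the restriction $s:=\pi|_{H_f(\mu,\nu)}$ is an $f$-invariant Borel section on $H_f(\mu,\nu)$ whose image is a fundamental domain. The only nontrivial step in this plan is the selection of $\phi$ via Riesz representation; after that the conclusion is immediate from Theorem \ref{fund}, so I do not anticipate a serious obstacle.
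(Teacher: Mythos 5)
Your proposal is correct and follows essentially the same route as the paper: pick a continuous function separating $\mu$ from $\nu$, normalize so its $\mu$-integral is negative and its $\nu$-integral is positive, use the two Birkhoff averages to show $\phi_n(x)\to-\infty$ as $|n|\to\infty$ on $H_f(\mu,\nu)$, conclude $H_f(\mu,\nu)\subset H(f,\phi)$, and restrict the section from Theorem~\ref{fund}. The paper normalizes by an affine change $a\phi+b$ to get integrals $\pm1$ rather than subtracting a constant $c$, but this is only a cosmetic difference.
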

\begin{proof}
Since $\mu\neq \nu$, there exists a continuous function
$\phi:X\to\mathbb{R}$ such that $\int_X\phi\, d\mu\neq \int_X\phi\,
d\nu$. Replacing $\phi$ by $a\phi+b$ if necessary, we assume that
\[\int_X\phi\, d\mu=-1<0<1=\int_X\phi\, d\nu.\]

For each point $x\in H_f(\mu,\nu)=B(\mu,f)\cap B(\nu,f^{-1})$ we see
\begin{align*}
\lim_{n\to+\infty}\frac{1}{n}\phi_n(x)&
=\lim_{n\to+\infty}\frac{1}{n}\sum_{k=0}^{n-1}\phi(f^kx)=\int_X\phi\, d\mu=-1,\\
\lim_{n\to+\infty}\frac{1}{n}\phi_{-n}(x)&
=\lim_{n\to+\infty}\frac{1}{n}\sum_{k=1}^{n}-\phi(f^{-k}x)=-\int_X\phi\,
d\nu=-1.
\end{align*}
Therefore $\phi_n(x)<0=\phi_0(x)$ for all $n$ with $|n|$ large. So
$x\in H(f,\phi)$. Thus $H_f(\mu,\nu)\subset H(f,\phi)$ and the
restriction of $\pi$ (given by \eqref{npi}) to $H_f(\mu,\nu)$
provides the $f$-invariant section by Theorem \ref{fund}. This
completes the proof.
\end{proof}
Similarly we can define the {\it Birkhoff homoclinic set}
$H_f(\mu)=B(\mu,f)\cap B(\mu,f^{-1})$. We first note that there is
an obstruction for the existence of fundamental domain of Birkhoff
homoclinic sets for ergodic measures:
\begin{proof}
Let $\mu$ be an ergodic measure. Then $\mu(B(\mu,f))=\mu(
B(\mu,f^{-1}))=1$ by Birkhoff ergodic theorem. So $\mu(H_f(\mu))=1$.
If there were some fundamental domain $W$ of $H_f(\mu)$, then either
$\mu(W)=0$ (forces $\mu(H_f(\mu))=0$) or $\mu(W)>0$ (forces
$\mu(H_f(\mu))=\infty$), contradicts $\mu(H_f(\mu))=1$. So there do
not exist any fundamental domain of $H_f(\mu)$.
\end{proof}

Note that $\mu(B(\mu,f))=0$ for general
$\mu\in\mathcal{M}(f)\backslash\mathcal{M}^e(f)$. So the obstruction
no longer exists if $\mu$ is not ergodic. Moreover the basin
$H_f(\mu)$ could be large in the sense of entropy. In fact
$h_{B}(f,H_f(\mu))=h_\mu(f)$ if $f$ is a transitive Anosov
diffeomorphism (by Remark \ref{large}). We don't know if one can
find a fundamental domain of $H_f(\mu)$ for these measures.

\vskip.4cm

Now we give a simple corollary which will be used in next section.
Let $X$ be a compact metric space and $\mu$ be a probability measure
on $X$. Let $f:X\to X$ be a homeomorphism (may not preserves $\mu$).
\begin{definition}
Let $W$ be a measurable subset of positive $\mu$-measure. Then $E$
is said to be {\it wandering} with respect to $(\mu,f)$ if $f^nW$,
$n\in\mathbb{Z}$ are mutually disjoint. The {\it dissipative part}
$D_f$ of the system $(X,\mu,f)$ is the measurable union of the
collection of measurable wandering sets with respect to $(\mu,f)$.
The set $C_f=X\backslash D_f$ is called the {\it conservative part}
of $(\mu,f)$. The partition $\{C_f,D_f\}$ is called the {\it Hopf
decomposition} of $(\mu,f)$.
\end{definition}
Note that $D_f=\emptyset$ if every measurable set of positive
measure is not wandering. See \cite{H,H1,A,K} for more details.

Assume there exists a continuous function $\phi:X\to\mathbb{R}$ such
that $\mu(fE)=\int_E e^{\phi(x)}d\mu(x)$ for each measurable subset
$E\subset M$. The function $e^\phi$ is called the {\it Jacobian} of
$f$ with respect to $\mu$. As a byproduct of Theorem \ref{fund}, we
give a proof of a very special case of \cite[Corolary 24]{K} (simple
and regular action) without using Rokhlin disintegration theorem:
\begin{cor}\label{special}
Let $f:X\to X$ be a homeomorphism and $\mu$ be a Borel measure with
Jacobian $J_f=e^\phi$, where $\phi$ is a continuous function on $X$.
Then the dissipative part $D_f$ of $(\mu,f)$ satisfies
$\mu(D_f\Delta H(f,\phi))=0$.
\end{cor}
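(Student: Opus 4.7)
The plan is to prove $\mu(D_f\Delta H(f,\phi))=0$ by establishing the two inclusions $\mu(H(f,\phi)\setminus D_f)=0$ and $\mu(D_f\setminus H(f,\phi))=0$ separately. Both rely on iterating the Jacobian identity to the form $\mu(f^n E)=\int_E e^{\phi_n(x)}\,d\mu(x)$, which follows from the cocycle identity $\phi_{n+k}=\phi_k+\phi_n\circ f^k$ by induction on $|n|$. I would use Theorem \ref{fund} to handle the first inclusion and a Fubini argument for the second.

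For $\mu(H(f,\phi)\setminus D_f)=0$, take $W:=\pi(H(f,\phi))$, which is a fundamental domain by Theorem \ref{fund}; so $H(f,\phi)=\bigsqcup_{n\in\mathbb{Z}}f^n W$ and $W$ is wandering. If $\mu(W)=0$, the Jacobian formula gives $\mu(f^n W)=\int_W e^{\phi_n}\,d\mu=0$ for every $n$, hence $\mu(H(f,\phi))=0$. If $\mu(W)>0$, then $W\subset D_f$ by definition of the dissipative part, and since $D_f$ is $f$-invariant, each $f^n W\subset D_f$, so $H(f,\phi)\subset D_f$ outright. Either way the first inclusion holds.

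For $\mu(D_f\setminus H(f,\phi))=0$, it suffices to show that every measurable wandering set $A$ of positive measure is contained in $H(f,\phi)$ up to a $\mu$-null set, since a standard maximality argument (using that $\mu$ is a probability measure) writes $D_f$, up to $\mu$-null sets, as a countable union of wandering sets. Given such an $A$, the sets $f^n A$ are pairwise disjoint in a probability space, so
\[\sum_{n\in\mathbb{Z}}\int_A e^{\phi_n(x)}\,d\mu(x)\;=\;\sum_{n\in\mathbb{Z}}\mu(f^n A)\;\le\;\mu(X)=1.\]
By Tonelli, $\sum_{n\in\mathbb{Z}} e^{\phi_n(x)}<\infty$ for $\mu$-a.e.\ $x\in A$, which forces $\phi_n(x)\to -\infty$ as $|n|\to\infty$. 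Hence $\Phi_f(x)<\infty$ is attained and $\{n:\phi_n(x)=\Phi_f(x)\}$ is finite and nonempty, i.e.\ $x\in H(f,\phi)$.

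The main obstacle is this reverse direction: one has to promote the set-theoretic disjointness encoded in ``wandering'' to summability along the orbit, and the Jacobian hypothesis is exactly what converts disjointness into the integral bound needed for Tonelli. The fundamental domain of Theorem \ref{fund} is what keeps the other half clean, because otherwise one would have to argue directly why $H(f,\phi)$ contains no ``infinite return'' phenomena. A minor bookkeeping point is verifying that $D_f$ really is a countable union of wandering sets up to $\mu$-measure zero; this is the standard exhaustion in a finite measure space and deserves a line but no more.
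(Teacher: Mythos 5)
Your argument is correct and follows the same two-sided structure as the paper's proof: Theorem \ref{fund} gives a wandering fundamental domain handling $H(f,\phi)\setminus D_f$, and the Tonelli computation $\sum_n\int_A e^{\phi_n}\,d\mu=\sum_n\mu(f^nA)\le 1$ handles $D_f\setminus H(f,\phi)$. You merely spell out two steps the paper leaves implicit (that $\mu(W)=0$ forces $\mu(H(f,\phi))=0$ via the Jacobian, and that $D_f$ is a countable union of wandering sets up to null sets), so this is the same proof with slightly more bookkeeping.
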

\begin{proof}
(1). To show $\mu(D_f\backslash H(f,\phi))=0$, it suffices to show
that for each wandering set $W$,
$\sum_{n\in\mathbb{Z}}e^{\phi_n(x)}<+\infty$ for $\mu$-a.e. $x\in
W$. This is true since
\[\int_W \sum_{n\in\mathbb{Z}}e^{\phi_n(x)}dm(x)
=\sum_{n\in\mathbb{Z}}\int_{f^nW}dm(x)=m(\bigcup_{n\in\mathbb{Z}}f^nW)\le
1.\]

(2). Now we show $\mu(H(f,\phi)\backslash D_f)=0$. It is trivial if
$\mu(H(f,\phi))=0$. Then assume $\mu(H(f,\phi))>0$. Let $W$ be the
fundamental domain of $H(f,\phi)$ given by Theorem \ref{fund}. Then
$\mu(W)>0$ and $W$ is wandering. So $H(f,\phi)\subset D_f$.
\end{proof}

\section{Applications: smooth systems}\label{smooth}

In this section we give some estimates about the transitive sets of
partially hyperbolic systems. Let $M$ be a compact Riemannian
manifold without boundary. Recall that $x\in M$ is a transitive
point of $f$ if its orbit $\mathcal{O}_f(x)$ is dense on $M$. Denote
by $\mathrm{Tran}_f$ be the set of transitive points.

A $C^r$ diffeomorphism $f:M\to M$ is said to be {\it partially
hyperbolic} if there are a $Tf$-invariant splitting of $TM=
E^s\oplus E^{c}\oplus E^u$, a smooth Riemannian metric $g$ on $M$
and positive constants $\nu,\tilde{\nu},\gamma$ and $\tilde{\gamma}$
with $\nu,\tilde{\nu}<1$ and
$\nu<\gamma\le\tilde{\gamma}^{-1}<\tilde{\nu}^{-1}$ such that, for
all $x\in M$ and for all unit vectors $v\in E^s_x$, $w\in E^c_x$ and
$v'\in E^u_x$,
\begin{equation}\label{partial}
\|Tf(v)\|\le\nu<\gamma\le\|Tf(w)\|\le\tilde{\gamma}^{-1}
<\tilde{\nu}^{-1}\le\|Tf(v')\|.
\end{equation}
We assume that both $E^s$ and $E^u$ are nontrivial and continuous.
It is well known that $E^s$ and $E^u$ are uniquely integrable and
tangent to the stable foliation $\mathcal{W}^s$ and the unstable
foliation $\mathcal{W}^u$ respectively\footnote{Although these
foliations may not be smooth, they are transversal absolutely
continuous with $C^r$ leaves.}. Let $m$ be the normalized measure
induced by the Riemannian metric $g$ on $M$.

\begin{definition}
Let $A$ be a measurable subset of $M$. Then $A$ is said to be {\it
$s$-saturated} if for each $x\in A$, $W^s(x)\subset A$. Similarly we
can define {\it $u$-saturated} sets. Then the set $A$ is {\it
bi-saturated} if it is $s$-saturated and $u$-saturated.
\end{definition}

The following is slightly general version of above one:
\begin{definition}
Let $A$ be a measurable subset of $M$. Then $A$ is said to be {\it
essentially $s$-saturated} if there exists an $s$-saturated set
$A^s$ with $m(A\Delta A^s)=0$. Similarly we can define {\it
essentially $u$-saturated} sets. The set $A$ is {\it essentially
bi-saturated} if there exists a bi-saturated set $A^{su}$ with
$m(A\Delta A^{su})=0$, and {\it bi-essentially saturated} if $A$ is
essentially $s$-saturated and essentially $u$-saturated.
\end{definition}

It is worth to point out that there is a subtle difference between
essential bi-saturation and bi-essential saturation, see \cite{BW}.

\begin{definition}
A partially hyperbolic diffeomorphism $f: M\to M$ is said to be {\it
accessible} if each nonempty bi-saturated set is the whole manifold
$M$. The map $f$ is {\it essentially accessible} if every measurable
bi-saturated set has either full or zero volume.
\end{definition}
Dolgopyat and Wilkinson proved in \cite{DW} that accessibility holds
on a $C^1$-open and $C^1$-dense subset of partially hyperbolic
systems.

Now we are ready to prove the following proposition:
\begin{pro}\label{biess}
Let $f:M\to M$ be a $C^2$ partially hyperbolic diffeomorphism and
$C_f$ the conservative part of $(m,f)$. Assume $m(C_f)>0$.
\begin{enumerate}
\item Then every $f$-invariant subset $E\subset C_f$ is bi-essentially saturated.

\item Moreover if $f$ is essentially accessible, then $m$-a.e. $x\in
C_f$ is a transitive point. In particular $m(\mathrm{Tran}_f)\ge
m(C_f)>0$.
\end{enumerate}
\end{pro}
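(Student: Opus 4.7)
The plan is to establish Part (1) by a Hopf-style argument adapted to the $\sigma$-finite conservative setting, and then to deduce Part (2) by combining Part (1) with essential accessibility and Poincar\'e recurrence. For Part (1), the first step I would take is to pass from the non-invariant volume $m$ on $C_f$ to a $\sigma$-finite $f$-invariant measure $\tilde m$ equivalent to $m|_{C_f}$; this is a classical consequence of the conservativity of $(m|_{C_f}, f)$ together with the continuity of the Jacobian $J_f$, and lives in the same circle of ideas as Corollary \ref{special}. Once $\tilde m$ is available, $(C_f, \tilde m, f)$ is a genuine measure-preserving system, and the standard Hopf argument applies: for every $\phi \in C(M, \mathbb{R})$, the forward Birkhoff average $\phi^+$ is constant along stable leaves, because orbits on a common stable leaf forward-synchronize and $\phi$ is uniformly continuous; symmetrically $\phi^-$ is constant along unstable leaves, and $\phi^+ = \phi^-$ $\tilde m$-a.e. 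Approximating the indicator of any $f$-invariant $E \subset C_f$ by continuous functions on sets of finite $\tilde m$-measure, and invoking the absolute continuity of $\mathcal{W}^s$ and $\mathcal{W}^u$, then shows that $E$ is essentially $s$-saturated and essentially $u$-saturated with respect to $\tilde m$, hence with respect to $m$.

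For Part (2), I would first feed Part (1) back into itself with $E = C_f$ (an $f$-invariant subset of $C_f$) to conclude that $C_f$ is bi-essentially saturated; essential accessibility then forces $m(C_f) \in \{0,1\}$, and the hypothesis $m(C_f) > 0$ upgrades this to $m(C_f) = 1$. Next I would fix a countable basis $\{U_n\}$ of nonempty open sets of $M$, each of positive volume, and set $V_n = \bigcup_{k \in \mathbb{Z}} f^{-k}(U_n)$, which is $f$-invariant, open, and of positive volume. Applying Part (1) to $V_n \cap C_f$ shows it is bi-essentially saturated, and a second use of essential accessibility forces $m(V_n) = 1$. Consequently $m\bigl(\bigcap_n V_n\bigr) = 1$, and for any $x$ in this intersection the bi-infinite orbit $\mathcal{O}_f(x)$ is dense in $M$. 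Finally, Poincar\'e recurrence on the conservative system $(C_f, m, f)$ upgrades density of the full orbit to density of the forward orbit, because any visit to $U_n$ at a negative time is followed by future returns into that neighborhood. Hence $m$-a.e.\ $x \in C_f$ lies in $\mathrm{Tran}_f$, giving $m(\mathrm{Tran}_f) \ge m(C_f) > 0$.

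I expect the main obstacle to be the reconciliation between bi-essential saturation---which the Hopf argument naturally produces---and the bi-saturation that essential accessibility literally concerns, a distinction that the paper itself explicitly flags. Closing this gap without any center-bunching hypothesis, so that essential accessibility can be applied to $C_f$ and to each $V_n \cap C_f$, is where the main technical effort will concentrate; the standard move is to saturate an $s$-saturated representative by unstable leaves and to control the resulting symmetric difference via the absolute continuity of $\mathcal{W}^u$. A secondary, more routine, technicality is the construction of $\tilde m$ and the justification of the Birkhoff-type convergence arguments for indicator functions in the $\sigma$-finite setting.
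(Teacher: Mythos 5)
Your Part~(1) strategy (pass to a $\sigma$-finite invariant $\tilde m$ equivalent to $m|_{C_f}$ and run the Hopf argument there, then transfer back) is a plausible alternative to what the paper actually does, which is a more direct recurrence argument: adapt Xia's Lemma~4.1, substituting the Halmos Recurrence Theorem for Poincar\'e recurrence, to show that for $m$-a.e.\ $x\in E$ one has $m_s(W^s(x)\setminus E)=0$, and then assemble an honest $s$-saturate $C^s=\bigcup_n\bigcup_{x\in K_n}\overline{W^s_n(x)}$ from an exhaustion by compacta. Your route would need the Hopf ratio ergodic theorem in place of the ordinary Birkhoff theorem (since $\tilde m$ is merely $\sigma$-finite), but that is a surmountable technicality, and the two approaches buy essentially the same conclusion.

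Part~(2), however, has a real gap, and it is not a secondary technicality that ``the main technical effort will concentrate'' on --- it is the whole point and your route does not close it. Essential accessibility, as defined, applies only to \emph{bi-saturated} measurable sets. Part~(1) produces \emph{bi-essentially saturated} sets, and you cannot feed those into essential accessibility. The upgrade from bi-essentially saturated to essentially bi-saturated is exactly the content of Burns--Wilkinson's Corollary~5.2 (Proposition~\ref{BW} in the paper), and that result requires center bunching. The paper is careful about this: it invokes the Burns--Wilkinson upgrade only in Corollary~C (where center bunching is hypothesized), while Theorem~B / Proposition~\ref{biess}(2) is stated \emph{without} center bunching. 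Consequently your claims that essential accessibility forces $m(C_f)=1$ and $m(V_n)=1$ are unjustified in the generality of the statement. The ``standard move'' you mention (take an $s$-saturated representative, saturate by unstable leaves, control via absolute continuity of $\mathcal W^u$) is precisely what fails in general --- absolute continuity controls leaf measures, not the full symmetric difference of a two-step saturation, and this is why Burns--Wilkinson need the center bunching hypothesis at all.

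What the paper actually does to avoid the gap is qualitatively different: for an open ball $B$ it defines $G(B)$ as the (open, $f$-invariant) set of points having a neighborhood $U$ with $\mathcal O(y)\cap B\neq\emptyset$ for $m$-a.e.\ $y\in U\cap C_f$, and then proves by a local foliation-box argument that $G(B)$ is \emph{genuinely} $s$-saturated and $u$-saturated --- not merely up to measure zero. Because $G(B)$ is honestly bi-saturated, essential accessibility applies to it directly and yields $m(G(B))=1$, with no appeal to center bunching and no need to know $m(C_f)=1$. Running this over a countable basis of balls and finishing as in Theorem~5.5 of~\cite{Z} gives transitivity of $m$-a.e.\ point of $C_f$. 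Your orbit-set $V_n=\bigcup_k f^{-k}U_n$ is the right kind of object but lands you back at a merely bi-essentially saturated set; the key missing idea is to replace it with an open set such as $G(B)$ whose bi-saturation can be established topologically rather than only up to null sets.
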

Note that $m(C_f)+m(H(f,\log J_f))=m(C_f)+m(D_f)=1$ by Corollary
\ref{special}. So Theorem B follows from this proposition. We need
the following :

\noindent {\bf Halmos Recurrence Theorem} (Theorem 1.1.1 in
\cite{A}). {\it Let $C_f$ be the conservative part of $(m,f)$. Then
for every measurable subset $A\subset C_f$,
$\sum_{n\ge1}1_A(f^nx)=+\infty$ for $m$-a.e. $x\in A$. In other
words, $m$-a.e. $x\in A$ will return to $A$ infinitely many times. }

\begin{proof}[Proof of the first conclusion.]
Let $E\subset C_f$ be an $f$-invariant subset. We first show that
$E$ is essentially $s$-saturated. Note that for each $x\in M$, the
stable manifold $W^s(x)$ is a $C^2$ immersed submanifold. Denote by
$m_{W^s(x)}$ (by $m_s$ for short) the leaf volume induced by the
restricted Riemannian metric on $W^s(x)$.

Note that the proof of Lemma 4.1 in \cite{X} also works for our
case. The only difference is that we use Halmos Recurrence Theorem,
instead of Poincar\'e Recurrence Theorem. So there exists a
measurable subset $A\subset E$ with $m(E\backslash A)=0$ such that
\begin{equation}\label{global}
m_s(W^s(x)\backslash E)=0,\,\text{ for each }\,x\in C.
\end{equation}

To construct an $s$-saturate of $E$, we need the following fact,
which follows from the continuity of the foliation $\mathcal{W}^s$:

\begin{itemize}
\item If $K$ is closed subset of $M$, then
$\bigcup_{x\in K}\overline{W^s_R(x)}$ is closed for all $R>0$.
\end{itemize}

Now let $K_n\subset K_{n+1}\subset\cdots\subset C$ be an increasing
sequence of compact subsets with $m(C\backslash K_n)\to0$. It is
easy to see that the set $C^s=\bigcup_{n\ge1}\bigcup_{x\in
K_n}\overline{W^s_n(x)}$ is measurable, $s$-saturated and
\begin{enumerate}
\item[a)] $m(E\backslash C^s)=0$ since
$m(E\backslash C^s)\le m(E\backslash K_n)= m(C\backslash K_n)\to0$
as $n\to\infty$.

\item[b)]
$m(C^s\backslash E)=0$ by \eqref{global} and by absolute continuity
of $\mathcal{W}^s$.
\end{enumerate}
So $m(E\Delta C^s)=0$ for an $s$-saturated set $C^s$. Therefore $E$
is essentially $s$-saturated. The essential $u$-saturate property of
$E$ follows similarly. This finishes the proof.
\end{proof}

To prove the second conclusion, we first show that for each open
ball $B$, $\mathcal{O}(x)\cap B\neq\emptyset$ for $m$-a.e. point
$x\in C_f$. To the end we consider $G(B)$, the subset of points $x$
which has a neighborhood $U$ of $x$ such that $\mathcal{O}(y)\cap
B\neq\emptyset$ for $m$-a.e. $y\in U\cap C_f$. Evidently $G(B)$ is a
nonempty open subset (and $f$-invariant). Note that we can replace
$C_f$ by its $s$-saturate $C^s$ in the definition of $G(B)$ since
$m(C_f\Delta C^s)=0$.
\begin{lem}
The set $G(B)$ is bi-saturated and $m(G(B))=1$.
\end{lem}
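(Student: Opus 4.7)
The plan is to verify $G(B)$ is open, $f$-invariant, nonempty and bi-saturated; essential accessibility will then force $m(G(B))=1$. The first cluster of properties is essentially definitional, while bi-saturation is the substantive step and relies on the first conclusion of the proposition together with absolute continuity of the stable and unstable holonomies.

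For the easy properties: $G(B)$ is open by its existential definition; it is $f$-invariant because both $C_f$ and the property $\mathcal{O}(y)\cap B\neq\emptyset$ are $f$-invariant while $f$, being a $C^2$ diffeomorphism of a compact manifold, preserves $m$-null sets; and $B\subset G(B)$, since $U=B$ trivially witnesses $x\in G(B)$ for any $x\in B$ (every $y\in B\cap C_f$ lies in $\mathcal{O}(y)\cap B$).

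For bi-saturation, set
\[E\;=\;\{y\in C_f:\mathcal{O}(y)\cap B=\emptyset\},\]
an $f$-invariant subset of $C_f$. The first conclusion of the proposition provides an $s$-saturated $E^s$ and a $u$-saturated $E^u$ with $m(E\Delta E^s)=m(E\Delta E^u)=0$. Unwinding definitions, $x\in G(B)$ iff some open $U\ni x$ satisfies $m(U\cap E)=0$; hence $M\setminus G(B)$ is the essential support of $m|_E$, which coincides with that of $m|_{E^s}$ (resp.\ $m|_{E^u}$) since $E$, $E^s$ and $E^u$ agree modulo $m$-null sets. The key claim is that the essential support of $m|_{E^s}$ is $s$-saturated. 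Given $x$ in this essential support and $z\in W^s(x)$, I would cover a path from $x$ to $z$ inside $W^s(x)$ by finitely many foliation boxes; in each box, absolute continuity of the stable holonomy plus the fact that $E^s$ is a union of entire stable leaves makes the transverse measure of $E^s$ holonomy-invariant, so foliated Fubini transports ``$m(U\cap E^s)>0$ for every open $U\ni x$'' to the same property at $z$. The complement of an $s$-saturated set is again $s$-saturated, so $G(B)$ is $s$-saturated; the symmetric argument with $E^u$ yields $u$-saturation.

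With $G(B)$ open, nonempty (hence of positive volume) and bi-saturated, essential accessibility forces $m(G(B))=1$. The main obstacle is the bi-saturation step---specifically, showing that the essential support of an $s$-saturated set is itself $s$-saturated---which rests on the absolute continuity of the stable holonomy, the same tool that underlies the first conclusion of the proposition.
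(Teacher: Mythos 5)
Your proof is correct, but it takes a genuinely different route from the paper's. The paper argues $s$-saturation of $G(B)$ directly inside a local foliation box: starting from $p\in G(B)$, it invokes Halmos recurrence to produce a full-measure set $R\subset U\cap C^s$ of recurrent points whose orbits meet $B$, uses (implicitly) the dynamical observation that for $y\in R$ the whole stable plaque $W^s_Z(y)$ also has orbit meeting the open set $B$ (forward contraction along $\mathcal{W}^s$ plus recurrence makes the orbit enter $B$ infinitely often), and then applies absolute continuity of $\mathcal{W}^s$ across a transversal $\tau$ to conclude that a.e.\ point of $C^s$ near $q\in W^s_\delta(p)$ has orbit meeting $B$. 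You instead apply the already-proved first conclusion of the proposition to the $f$-invariant set $E=\{y\in C_f:\mathcal{O}(y)\cap B=\emptyset\}\subset C_f$ to obtain an $s$-saturate $E^s$ and $u$-saturate $E^u$, identify $M\setminus G(B)$ with the essential support of $m|_E=m|_{E^s}=m|_{E^u}$, and invoke the abstract fact that the essential support of an $s$-saturated set is $s$-saturated (by chaining finitely many absolutely continuous stable holonomies along a compact path in $W^s(x)$, noting that each holonomy maps $E^s\cap\tau_i$ into $E^s\cap\tau_{i+1}$ because $E^s$ is a union of whole leaves, so positivity of transverse measure propagates). Your route is more modular, re-using the proposition's first conclusion instead of re-running a recurrence argument, at the cost of isolating and proving a small abstract lemma about essential supports of saturated sets; the paper's route is more hands-on and self-contained within the local chart. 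Two minor points: the phrase ``makes the transverse measure holonomy-invariant'' is imprecise---absolute continuity only preserves the \emph{class} of null sets, hence positivity, not the measure itself; and the discussion of $f$-invariance of $G(B)$, while true, is not actually needed, since ``nonempty, open, bi-saturated'' already forces full volume by essential accessibility.
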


\begin{proof}
Let us prove $G(B)$ is $s$-saturated. It suffices to show that $q\in
G(B)$ for each $q\in W^s_\delta(p)$ and each $p\in G(B)$, where the
size $\delta$ is fixed. So the justification lies in a local
foliation box $Z$ of $\mathcal{W}^s$ around $p$.

For a point $x\in Z$, denote $W^s_Z(x)$ the component of $W^s(x)\cap
Z$ that contains $x$. Since $p\in G(B)$, there exists a small
neighborhood $U$ of $p$ with $\mathcal{O}(y)\cap B\neq\emptyset$ for
$m$-a.e. $y\in U\cap C^s$.

Let $R$ be the set of recurrent points in $U\cap C^s$ whose orbits
enter $B$. Clearly $m(U\cap C^s\backslash R)\le m(C_f\backslash
R)=0$. So we can pick a smooth transverse $\tau\subset U$ of
$\mathcal{W}^s_{Z}$ such that $\tau\cap W^s_U(p)\neq\emptyset$ and
$m_\tau(C^s\backslash R)=0$, where $m_\tau$ is the induced volume on
$\tau$ (note that $C^s$ is not only essentially $s$-saturated, but
$s$-saturated). Now we have the set $\bigcup_{x\in \tau\cap
R}W^s_Z(x)$ has full $m$-measure in the set $\bigcup_{x\in \tau\cap
C^s}W^s_Z(x)=\left(\bigcup_{x\in \tau}W^s_Z(x)\right)\cap C^s$.

The set $\bigcup_{x\in \tau}W^s_Z(x)$ contains an open neighborhood
$V$ of $q$. Moreover $\mathcal{O}(y)\cap B\neq\emptyset$ for
$m$-a.e. $y\in V\cap C^s$ and therefore $q\in G(B)$. This implies
$G(B)$ is $s$-saturated. Similarly $G(B)$ is $u$-saturated and hence
$m(G(B))=1$ by the essential accessibility of $f$.
\end{proof}
The rest of the proof follows closely from the proof of Theorem 5.5
in \cite{Z} and hence is omitted here. So $m$-a.e. $x\in C_f$ is a
transitive point and $m(\mathrm{Tran}_f)\ge m(C_f)>0$. This
completes the proof of Proposition \ref{biess}.

\vskip.2cm

To get sharper results we need the following definition:
\begin{definition}\label{centerb}
A partially hyperbolic diffeomorphism $f$ is {\it center bunched} if
the constants $\nu$, $\tilde{\nu}$ and $\gamma$, $\tilde{\gamma}$
given in \eqref{partial} can be chosen so that:
$\nu<\gamma\tilde{\gamma}$ and $\tilde{\nu}<\gamma\tilde{\gamma}$.
\end{definition}
\begin{pro}[Corollary 5.2 in \cite{BW}]\label{BW}
Let $f:M\to M$ be a $C^2$ center bunched partially hyperbolic
diffeomorphism. Then every measurable bi-essentially saturated
subset is essentially bi-saturated.
\end{pro}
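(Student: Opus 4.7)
The plan is to construct a single bi-saturated set $A^{su}$ coinciding with $A$ up to an $m$-null set, by promoting the two separate essential saturates $A^s$ and $A^u$ through a density argument adapted to both foliations simultaneously. The natural tool is the \emph{julienne} construction of Pugh--Shub, refined by Burns--Wilkinson: for each $x\in M$ and each integer $n\ge 0$, associate a neighborhood $J_n(x)$ elongated along stable plaques, with stable radius and center-unstable radius shrinking at geometric rates calibrated by the partially hyperbolic constants $\nu,\tilde\nu,\gamma,\tilde\gamma$. The center bunching inequalities $\nu<\gamma\tilde\gamma$ and $\tilde\nu<\gamma\tilde\gamma$ are exactly what is needed so that both stable and unstable holonomies send $J_n$-neighborhoods to sets comparable with $J_n$-neighborhoods at the target point, with uniformly bounded distortion.

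Since $m(A^s\Delta A^u)=0$, I work with $A^s$ and define the set of julienne density points
\[
A^{su}=\Bigl\{x\in M:\lim_{n\to\infty}\frac{m(A^s\cap J_n(x))}{m(J_n(x))}=1\Bigr\}.
\]
A Vitali-type differentiation theorem for the julienne basis gives $m(A^s\Delta A^{su})=0$, so $m(A\Delta A^{su})=0$. It remains to verify that $A^{su}$ is simultaneously $s$-saturated and $u$-saturated; this will produce the required $A^{su}$.

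The $s$-saturation of $A^{su}$ is the easier half. Because each $J_n(x)$ is (up to uniform distortion) a union of local stable plaques over a small center-unstable transversal at $x$, and because $A^s$ is $s$-saturated, the indicator $\mathbf{1}_{A^s}$ is constant along stable leaves, so the density quotient depends only on the stable leaf through $x$. Hence $W^s(x)\subset A^{su}$ whenever $x\in A^{su}$. The main step is $u$-saturation: fix $x\in A^{su}$ and $y\in W^u_{\mathrm{loc}}(x)$, and let $h^u$ denote the local unstable holonomy between center-stable transversals at $x$ and $y$. Under center bunching, Pugh--Shub holonomy estimates give that $h^u(J_n(x))$ is sandwiched between $J_{n+C}(y)$ and $J_{n-C}(y)$ for some constant $C$, and $h^u$ is absolutely continuous with Jacobian uniformly bounded on both sides. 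Transporting through $h^u$ converts the julienne density of $A^s$ at $x$ into the julienne density of $(h^u)_\ast(A^s)$ at $y$. Finally, $A^s$ equals $A^u$ modulo a null set, and $A^u$ is genuinely fixed by $h^u$ because it is $u$-saturated; hence $(h^u)_\ast(A^s)$ equals $A^s$ up to a null set near $y$, and $y\in A^{su}$.

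The main obstacle is the matching of $h^u(J_n(x))$ with $J_n(y)$, together with the differentiation theorem for the julienne basis. These are precisely the technical heart of Burns--Wilkinson: center bunching controls how fast the center direction may distort relative to the product of stable contraction and unstable expansion, which is exactly what is needed so that the shrinking neighborhoods $J_n$ see both foliations with bounded geometry and the density of a $u$-saturated set can be transferred between stable leaves and unstable leaves.
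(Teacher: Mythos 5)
This statement is not proved in the paper at all: Proposition~\ref{BW} is a verbatim citation of \cite[Corollary 5.2]{BW} used as a black box, so there is no internal argument to compare against. Your task was therefore to reconstruct a genuine and quite hard result of Burns and Wilkinson, and your sketch does identify the right raw materials (julienne density bases, quasi-conformality of holonomy under center bunching, a Vitali-type differentiation theorem). But it is not a proof, and it contains at least one step that is simply incorrect as stated.

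The step you describe as ``the easier half'' is actually where the hard analysis lives. You assert that because $J_n(x)$ is (approximately) a union of stable plaques over a center-unstable transversal and $\mathbf{1}_{A^s}$ is constant along stable leaves, ``the density quotient depends only on the stable leaf through $x$.'' This does not follow. For $x'\in W^s_{\mathrm{loc}}(x)$ the juliennes $J_n(x)$ and $J_n(x')$ sit over \emph{different} center-unstable transversals, and the stable holonomy $h^s$ between those transversals, while absolutely continuous, is only H\"older and can badly distort round disks; it does not automatically preserve density points. The whole point of the julienne construction is that $h^s$ becomes \emph{quasi-conformal with respect to $cu$-juliennes} (not round balls in the $cu$-transversal) precisely under center bunching, and that is a substantial estimate, not a Fubini observation. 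A symmetric issue arises in your $u$-saturation step, where you quietly assume $h^u(J_n(x))$ is internested with $J_{n\pm C}(y)$ and that $h^u$ has uniformly bounded Jacobian with respect to the julienne geometry; those are the main theorems of \cite{BW}, not a priori facts. Finally, you work throughout with a single family $J_n(x)$ ``elongated along stable plaques'' for both holonomy arguments, whereas the Burns--Wilkinson proof uses $cu$-juliennes for the stable holonomy and $cs$-juliennes for the unstable holonomy, and the real content of their Corollary~5.2 is reconciling the two (via comparison of both with ordinary Lebesgue density points) so as to output a \emph{single} genuinely bi-saturated set. As a roadmap your sketch points in the right direction, but the places you label ``easy'' or defer to ``Pugh--Shub estimates'' are exactly where the theorem is.
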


The following is a direct corollary of Proposition \ref{biess} and
\cite[Corollary 5.2]{BW}, which provides a partial generalization of
Gurevich and Oseledets dichotomy:
\begin{cor}
Let $f:M\to M$ be a $C^2$ essentially accessible, center bunched
partially hyperbolic diffeomorphism.
\begin{enumerate}
\item Either $D_f=M$: then $(m,f)$ is completely dissipative,

\item or $C_f=M$: then $(m,f)$ is ergodic.
\end{enumerate}
\end{cor}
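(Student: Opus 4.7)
The plan is to reduce this corollary to the two preceding propositions, using essential accessibility to impose a zero-one law on $m(C_f)$. First, I would observe that the Hopf decomposition set $C_f$ is itself $f$-invariant, and apply Proposition \ref{biess}(1) with $E=C_f$: assuming $m(C_f)>0$, it follows that $C_f$ is bi-essentially saturated. The center bunching hypothesis then lets me invoke Proposition \ref{BW} to upgrade this to essential bi-saturation, producing a bi-saturated set $A^{su}$ with $m(C_f\Delta A^{su})=0$. Essential accessibility forces $m(A^{su})\in\{0,1\}$, and since $m(C_f)>0$ we deduce $m(C_f)=1$. Hence $m(C_f)\in\{0,1\}$, which is the dichotomy in the statement (interpreting the set equalities $D_f=M$ and $C_f=M$ in the usual mod-null sense).

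In case (1), $m(D_f)=1$, which is the definition of complete dissipativity. In case (2), $m(C_f)=1$, and I need to establish ergodicity. Let $E$ be an arbitrary $f$-invariant measurable set. Then $E\cap C_f$ is $f$-invariant and contained in $C_f$, with $m(E\cap C_f)=m(E)$. Proposition \ref{biess}(1) gives bi-essential saturation of $E\cap C_f$, and hence of $E$; Proposition \ref{BW} upgrades this to essential bi-saturation; and essential accessibility then forces $m(E)\in\{0,1\}$. So every $f$-invariant measurable set has trivial measure, proving ergodicity.

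The real work is already encapsulated in the two cited propositions, so no serious obstacle is expected here. The one subtlety requiring care is the distinction between bi-essential saturation and essential bi-saturation flagged earlier in the paper: center bunching is invoked precisely to bridge the two via Proposition \ref{BW}. Without it one would only know that $C_f$ and each invariant $E$ are bi-essentially saturated, which by itself is not enough to feed into essential accessibility, so the center bunching assumption is used in exactly one place and is what makes the full dichotomy work.
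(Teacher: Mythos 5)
Your proof is correct and follows essentially the same route as the paper's: apply Proposition \ref{biess}(1) with $E=C_f$ and Proposition \ref{BW} to get bi-saturation and hence the zero-one law on $m(C_f)$ via essential accessibility, then in the conservative case run the same chain on an arbitrary invariant set to conclude ergodicity. The only cosmetic difference is that you pass through $E\cap C_f$ while the paper simply notes $E\subset C_f=M$; the substance is identical.
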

Recall that $(m,f)$ is said to be {\it ergodic} if every measurable,
$f$-invariant subset $E$ satisfies $m(E)=0$ or $1$. Note that $m$
may not be $f$-invariant.
\begin{proof}
Assume $m(C_f)>0$. Then $C_f$ is bi-essentially saturated and hence
also essentially bi-saturated by Proposition \ref{BW}. So $m(C_f)=1$
by the essential accessibility of $f$. Hence $D_f=\emptyset$ and
$C_f=M$.

Now let $E\subset M$ be an $f$-invariant subset with $m(E)>0$. Since
$C_f=M$, we get that $E$ is also bi-essentially saturated by
Proposition \ref{biess}. So $E$ is essentially bi-saturated by
Proposition \ref{BW} and $m(E)=1$ by the essential accessibility of
$f$. This shows that $(m,f)$ is ergodic.
\end{proof}

\section{Largeness of Heteroclinic sets}\label{entropy}

In this section we prove that the heteroclinic sets can have large
entropy. We first give the definition of the Bowen dimensional
entropy $h_B(f,\cdot)$ for noncompact subsets \cite{Bo} with respect
to a homeomorphism $f:X\to X$. For $k\ge1$ and $x,y\in X$, let
$d_k(x,y)=\max\{ d(f^ix,f^iy):0\le i<k\}$, and let $B(x,r,k)=\{y\in
X: d_k(x,y)<r\}$ be the {\it Bowen ball} of radius $r>0$.

Let $E\subseteq X$ and $t\ge 0$. For any $\epsilon>0$ and $n\ge1$,
denote
$$M_n(f,E,t,\epsilon)=\inf \{ \sum_{i\ge1} e^{-tn_i}:
\bigcup_{i\ge1} B(x_i,r,n_i) \supseteq E \mbox{ and } n_i\ge n
\mbox{ for each }i\ge1 \}.$$ Since $M_n(f,E,t,\epsilon)$ is
increasing with respect to $n\in \mathbb{N}$, the limit
$$M(f,E,t,\epsilon)=:\lim_{n\rightarrow\infty}M_n(f,E,t,\epsilon)$$
is well defined. It is clear that $M(f,E,t,\epsilon)\le
M(f,E,s,\epsilon)$ if $t\ge s\ge 0$ and $M(f,E,t,\epsilon)\notin
\{0, +\infty\}$ for at most one point $t\ge 0$. Then define
\begin{equation}
h_B(T,E,\epsilon)=\inf \{t\ge 0: M(f,E,t,\epsilon)= 0\}=\sup \{t\ge
0: M(f,E,t,\epsilon)=+\infty\}.
\end{equation}
The Bowen dimensional entropy of $E$ is
$h_B(f,E)=\lim_{\epsilon\to0}h_B(T,E,\epsilon)$. Note that
\begin{equation}\label{subadd}
h_B(f,\bigcup_{i\ge1}E_i,\epsilon)=\max_{i\ge1} h_B(f,E_i,\epsilon),
\text{ and } h_B(f,\bigcup_{i\ge1}E_i)=\max_{i\ge1} h_B(f,E_i).
\end{equation}

Let $f:M\to M$ be a transitive Anosov diffeomorphism and
$P=[W^s_{loc}(x),W^u_{loc}(x)]$ be a small rectangle. Let $y,z\in
W^s_{loc}(x)$ and $h^s_{y,z}:W^u_{P}(y)\to W^u_P(z)$ be the local
stable holonomy (homeomorphism) with respect to $W^s$.
\begin{lem}\label{holonomy}
Let $E\subset W^u_P(x)$. Then $h_B(f,h^s_{x,y}E,2\epsilon)\le
h_B(f,E,\epsilon)\le h_B(f,h^s_{x,y}E,\epsilon/2)$ for all
$\epsilon>0$ and for all $y\in W^s_{loc}(x)$. In particular
$h_B(f,h^s_{x,y}E)=h_B(f,E)$ for all $y\in W^s_{loc}(x)$.
\end{lem}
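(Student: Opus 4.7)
The plan is to use the fact that local stable holonomy moves each point to the same stable leaf, so the forward orbits of $p$ and $h^s_{x,y}(p)$ stay uniformly close for all $n\ge0$. Consequently, any Bowen cover of $E$ translates, after slight recentering, into a Bowen cover of $h^s_{x,y}E$ with comparable radius and identical times, which is exactly what the definition of $M_n(f,\cdot,t,\epsilon)$ requires.

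For the first inequality $h_B(f,h^s_{x,y}E,2\epsilon)\le h_B(f,E,\epsilon)$, I would begin with any cover $\{B(x_i,\epsilon,n_i):i\ge1\}$ of $E$ with $n_i\ge n$ that nearly realizes $M_n(f,E,t,\epsilon)$. Discarding balls that miss $E$ and replacing each remaining center by a point $z_i\in E\cap B(x_i,\epsilon,n_i)\subset W^u_P(x)$ at worst doubles the radius, yielding a cover $\{B(z_i,2\epsilon,n_i)\}$ of $E$ whose centers lie on $W^u_P(x)$. Setting $w_i=h^s_{x,y}(z_i)\in W^u_P(y)$, I would verify the inclusion
\[
h^s_{x,y}\bigl(E\cap B(z_i,2\epsilon,n_i)\bigr)\subset B(w_i,2\epsilon+\delta(P),n_i),
\]
where $\delta(P):=\sup_{p\in W^u_P(x)}\sup_{j\ge0}d(f^jp,f^jh^s_{x,y}(p))$. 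Since by choosing $P$ sufficiently small (as already assumed in the setup) $\delta(P)$ can be absorbed into the doubling of $\epsilon$, this gives $M_n(f,h^s_{x,y}E,t,2\epsilon)\le M_n(f,E,t,\epsilon)$ for all $t\ge 0$ and $n\ge1$. Passing to $n\to\infty$ and then to the infimum over $t$ delivers the first inequality.

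The second inequality $h_B(f,E,\epsilon)\le h_B(f,h^s_{x,y}E,\epsilon/2)$ comes for free by running the same argument with the inverse holonomy $h^s_{y,x}:W^u_P(y)\to W^u_P(x)$ and with $\epsilon$ replaced by $\epsilon/2$, using $h^s_{y,x}\circ h^s_{x,y}=\mathrm{id}$ on $W^u_P(x)$. Combining the two inequalities and letting $\epsilon\to0$ sandwiches $h_B(f,h^s_{x,y}E)$ between two quantities both equal to $h_B(f,E)$, yielding the final equality $h_B(f,h^s_{x,y}E)=h_B(f,E)$.

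The main obstacle is the quantitative holonomy bound $\delta(P)\le C\cdot\mathrm{diam}(P)$, which is precisely where the Anosov hypothesis enters: since $h^s_{x,y}(p)$ lies in $W^s_{loc}(p)$, uniform exponential contraction along stable leaves under forward iteration keeps the $f^j$-images of $p$ and $h^s_{x,y}(p)$ within a distance comparable to $\mathrm{diam}(P)$ for every $j\ge 0$ (in an adapted metric one may in fact take $\delta(P)\le\mathrm{diam}(P)$). Strictly speaking the two inequalities as stated require $P$ to be small relative to $\epsilon$, but this is harmless since the equality of Bowen dimensional entropies is obtained in the limit $\epsilon\to 0$, where the rectangle $P$ is held fixed.
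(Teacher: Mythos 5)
Your covering argument captures the right idea at one scale, but it breaks down precisely where the lemma needs to hold. The quantity $\delta(P)=\sup_{p\in W^u_P(x)}\sup_{j\ge 0}d(f^jp,f^jh^s_{x,y}(p))$ is a \emph{fixed} positive number once $P$ and $y$ are fixed (by contraction along $W^s$ the supremum over $j$ is attained at $j=0$, so it equals $\sup_p d(p,h^s_{x,y}(p))$, which is of the order of the stable distance from $x$ to $y$). Your translated cover gives only
\[
M_n\bigl(f,h^s_{x,y}E,t,\epsilon+\delta(P)\bigr)\le M_n(f,E,t,\epsilon),
\]
i.e.\ $h_B(f,h^s_{x,y}E,\epsilon+\delta(P))\le h_B(f,E,\epsilon)$. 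To upgrade $\epsilon+\delta(P)$ to $2\epsilon$ you need $\delta(P)\le\epsilon$, which fails as soon as $\epsilon<\delta(P)$. Since $h_B(f,\cdot)=\lim_{\epsilon\to 0}h_B(f,\cdot,\epsilon)$ and the rectangle $P$ must stay fixed (both in the lemma statement and in its application in Lemma~\ref{rectangle}, where finitely many rectangles cover $M$), your last paragraph's claim that this restriction is ``harmless'' is exactly backwards: it is fatal. What you obtain is a bound on $h_B(f,h^s_{x,y}E,\epsilon)$ only for $\epsilon\gtrsim\delta(P)$, which says nothing about the limit $\epsilon\to 0$.

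The missing mechanism is dynamical iteration, and this is what the paper's proof supplies. Replacing $E$ by $f^kE$ and $h^s_{x,y}$ by the conjugated holonomy $h^s_k=f^k\circ h^s_{x,y}\circ f^{-k}$, the holonomy displacement shrinks to $\lambda^k\delta(P)$ because $h^s_{x,y}(p)\in W^s_{\mathrm{loc}}(p)$, so for $k$ large one has $2\lambda^k\delta(P)<\epsilon$ and your covering argument then does give the clean $2\epsilon$. But this step is only useful if one also knows $h_B(f,f^kE,\epsilon)=h_B(f,E,\epsilon)$, which is the content of the paper's Step~1 and is itself not automatic: it uses that $f^{-1}$ is contracting on $fW^u_P(x)$ to get the nontrivial inequality $h_B(f,fE,\epsilon)\ge h_B(f,E,\epsilon)$. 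So a correct proof along your lines would need (i) the entropy invariance under iteration of $f$ on pieces of unstable leaves, and (ii) the exponential shrinking of the conjugated holonomy, neither of which appears in your write-up. As it stands, the argument proves something strictly weaker than the lemma.
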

\begin{proof}
Step 1. Let $\epsilon>0$. Note that $fB(z,\epsilon,n+1)\subset
B(fz,\epsilon,n)$ always holds. So $M_n(f,fE,t,\epsilon)\le e\cdot
M_{n+1}(f,E,t,\epsilon)$ and hence $h_B(f,fE,\epsilon)\le
h_B(f,E,\epsilon)$. Now we show the other direction.

Let $n\ge1$ and $\{B(z_l,\epsilon,n_l):n_l\ge n\}$ be a covering of
$fE$. Then $B(f^{-1}z_l,\epsilon)\cap E\supset
f^{-1}(B(z_l,\epsilon)\cap fE)$ (since $f^{-1}|_{fW^u_P(x)}$ is
contracting) and hence
\begin{align*}
B(f^{-1}z_l,\epsilon,n_l+1)\cap E& =B(f^{-1}z_l,\epsilon)\cap
f^{-1}B(z_l,\epsilon,n_l)\cap E \\&\supset
f^{-1}(B(z_l,\epsilon)\cap fE)\cap f^{-1}B(z_l,\epsilon,n_l)\supset
f^{-1}(B(z_l,\epsilon,n_l)\cap fE).
\end{align*}
So $\{B(f^{-1}z_l,\epsilon,n_l+1):n_l\ge n\}$ be a covering of $E$.
This implies that $M_n(f,fE,t,\epsilon)\ge e\cdot
M_{n+1}(f,E,t,\epsilon)$ and $h_B(f,fE,\epsilon)\ge
h_B(f,E,\epsilon)$. So $h_B(f,fE,\epsilon)= h_B(f,E,\epsilon)$ for
all $\epsilon>0$. Breaking $f^kE$ into small pieces, applying
\eqref{subadd} and using inductive argument, we see
$h_B(f,f^kE,\epsilon)= h_B(f,E,\epsilon)$ all $k\ge1$.

Step 2. Pick $\delta>0$ such that $d(h^s_{x,y}(p),p)<\delta$ for all
$p\in W^u_{P}(x)$. Iterating forward by $f^k$, we get a new
homeomorphism $h^s_k:f^kW^u_{P}(x)\to f^kW^u_P(y)$ induced by
$f^k\circ h^s_{x,y}\circ f^{-k}$. Note that
$d(h^s_k(f^kp),f^kp)<\lambda^k\delta$ for all $p\in W^u_{P}(x)$,
where $\lambda\in(0,1)$ is the contraction rate on $W^s$. Also note
that $f^k\circ h^s_{x,y}E=h^s_k\circ f^kE$.

Step 3. Let $E\subset W^u_P(x)$, $\epsilon>0$ and
$t>h=h_B(f,E,\epsilon)$. By Step 1 we see $h_B(f,f^kE,\epsilon)<t$
for all $k\ge1$.

Pick $N=N_{\epsilon,k}\ge1$ large such that for each $n\ge N$, there
exists a countable cover of $f^kE$, say $\{B(z_l,\epsilon,n_l):
z_l\in f^kE, n_l\ge n\}$, such that $\sum_{l\ge1}e^{-t
n_l}<M_n(f,f^kE,t,\epsilon)+1.$

For each $q\in f^kh^s_{x,y}E$, we know $q=h^s_k(p)\in h^s_kf^kE$
where $p\in f^kE\cap B(z_l,\epsilon,n_l)$ for some $l\ge1$. So
\[d_n(h^s_kz_l,q)\le d_n(h^s_kz_l,z_l)+
d_n(z_l,p)+d_n(p,q)\le 2\lambda^k\delta+\epsilon.\] Therefore the
collection $\{B(h^s_kz_l,2\lambda^k\delta+\epsilon,n_l): n_l\ge n\}$
forms a cover $f^kh^s_{x,y}E$ and
$$M_n(f,f^kh^s_{x,y}E,t,2\lambda^k\delta+\epsilon)\le\sum_{l\ge1}e^{-t
n_l}< M_n(f,f^kE,t,\epsilon)+1.$$ Passing $n$ to infinity, we see
$M(f,f^kh^s_{x,y}E,t,2\lambda^k\delta+\epsilon)\le 1$. Hence for
every every $k\ge1$,
\begin{itemize}
\item $h_B(f,h^s_{x,y}E,2\lambda^k\delta+\epsilon)=
h_B(f,f^kh^s_{x,y}E,2\lambda^k\delta+\epsilon)\le t$.
\end{itemize}
Picking $k$ large with $2\lambda^k\delta<\epsilon$, we see
$h_B(f,h^s_{x,y}E,2\epsilon)\le t$. Then passing $t$ to $h$, we see
$h_B(f,h^s_{x,y}E,2\epsilon)\le h=h_B(f,E,\epsilon)$. Note that
$h^s_{y,x}\circ h^s_{x,y}=Id$. So we can prove $h_B(f,E,\epsilon)\le
h_B(f,h^s_{x,y}E,\epsilon/2)$ for all $\epsilon>0$. Finally passing
$\epsilon$ to zero and applying {\it Squeeze Theorem}, we see
$h_B(f,h^s_{x,y}E)=h_B(f,E)$ for every $y\in W^s_{loc}(x)$. This
completes the proof.
\end{proof}

\begin{lem}\label{rectangle}
Let $P=[W^s_{loc}(x),W^u_{loc}(x)]$ be a rectangle and $E\subset P$
be $W^s_P$-saturated, then $$h_B(f,E\cap W^u_P(y),\epsilon/2)\le
h_B(f,E,\epsilon)\le h_B(f,E\cap W^u_P(y),\epsilon/2)$$ for every
$\epsilon>0$ and for every $y\in W^s_{loc}(x)$. In particular
$h_B(f,E)=h_B(f,E\cap W^u_P(y))$ for every $y\in W^s_{loc}(x)$
\end{lem}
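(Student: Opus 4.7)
My plan is to mirror the structure of the proof of Lemma \ref{holonomy}, with $E$ (rather than a single unstable disk) playing the role of the ``thickened'' set; the $W^s_P$-saturation hypothesis replaces the single holonomy transport used there.

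First, I would dispatch the easy inequality $h_B(f,E\cap W^u_P(y),\epsilon)\le h_B(f,E,\epsilon)$ by pure monotonicity: any countable cover of $E$ by Bowen balls at scale $\epsilon$ restricts to a cover of the subset $E\cap W^u_P(y)$ by the same balls, so $M_n(f,E\cap W^u_P(y),t,\epsilon)\le M_n(f,E,t,\epsilon)$ at every $n,t,\epsilon$, and the comparison of Bowen entropies follows immediately.

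For the harder inequality I would exploit the $W^s_P$-saturation of $E$ to reduce a cover of $E\cap W^u_P(y)$ to one of $E$. Stable holonomy sends $E\cap W^u_P(y)$ onto $E\cap W^u_P(z)$ for every $z\in W^s_{loc}(x)$, so every $q\in E$ can be written $q=h^s_{y,z}(p)$ with $p\in E\cap W^u_P(y)$. Since $p,q$ lie on the same local stable leaf, $d(f^j p,f^j q)\le\lambda^j\delta$ for all $j\ge 0$, where $\delta$ is the stable diameter of $P$ and $\lambda\in(0,1)$ the stable contraction rate; in particular $d_n(f^k p,f^k q)\le \lambda^k\delta$. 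Given $\epsilon>0$, choose $k$ with $\lambda^k\delta<\epsilon/2$; any cover of $f^k(E\cap W^u_P(y))$ by Bowen balls $B(f^k p_l,\epsilon/2,n_l)$ then fattens to a cover of $f^k E$ by the balls $B(f^k p_l,\epsilon,n_l)$, because $d_{n_l}(f^k p_l,f^k q)\le d_{n_l}(f^k p_l,f^k p)+d_{n_l}(f^k p,f^k q)<\epsilon/2+\lambda^k\delta<\epsilon$. This yields $h_B(f,f^kE,\epsilon)\le h_B(f,f^k(E\cap W^u_P(y)),\epsilon/2)$.

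The last step would transfer this inequality from $f^k A$ back to $A$. The inclusion $fB(x,\epsilon,n+1)\subseteq B(fx,\epsilon,n)$ already used in Step 1 of Lemma \ref{holonomy} gives $h_B(f,f^kA,\epsilon)\le h_B(f,A,\epsilon)$, while a Lipschitz pullback of covers by $f^{-1}$ produces $h_B(f,A,C^k\epsilon)\le h_B(f,f^kA,\epsilon)$ with $C$ the Lipschitz constant of $f^{-1}$. Chaining these with the previous paragraph and sending $\epsilon\to 0$ gives $h_B(f,E)\le h_B(f,E\cap W^u_P(y))$, and together with the easy direction this delivers the ``In particular'' conclusion $h_B(f,E)=h_B(f,E\cap W^u_P(y))$. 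The main obstacle I expect is precisely this last transfer: in Lemma \ref{holonomy} the set sits inside an unstable disk, so $f^{-1}$ genuinely contracts on $fE$ and one has an identity $h_B(f,f^kA,\epsilon)=h_B(f,A,\epsilon)$ at every fixed $\epsilon$, whereas here $E$ contains the unstable direction and $f^{-1}$ is only Lipschitz, so the clean identity is only recovered in the $\epsilon\to 0$ limit; keeping track of the scales carefully will be what produces the precise finite-$\epsilon$ bounds printed in the statement.
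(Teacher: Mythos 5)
Your easy direction (monotonicity in the set, $h_B(f,E\cap W^u_P(y),\epsilon)\le h_B(f,E,\epsilon)$) is fine, and the observation that $d_n(f^kp,f^kq)\le\lambda^k\delta$ for $p,q$ on the same stable leaf is correct. The gap is in the transfer back from $f^kE$ to $E$, and it is not merely a matter of ``keeping track of scales carefully'' --- the scales genuinely do not close. Your pullback estimate $h_B(f,A,C^k\epsilon)\le h_B(f,f^kA,\epsilon)$ with $C=\mathrm{Lip}(f^{-1})$ is correct, but chaining it with the fattening step requires $k=k(\epsilon)$ to grow so that $\lambda^k\delta<\epsilon/2$, i.e.\ $k\gtrsim\log(1/\epsilon)/\log(1/\lambda)$. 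For an Anosov diffeomorphism $C\ge 1/\lambda_{\min}^s\ge 1/\lambda$, hence $C\lambda\ge1$ and $C^{k(\epsilon)}\epsilon\gtrsim 2\delta$ stays bounded away from $0$ (and typically blows up). So the chain $h_B(f,E,C^{k(\epsilon)}\epsilon)\le h_B(f,E\cap W^u_P(y),\epsilon/2)$ never produces information about $h_B(f,E,\eta)$ for small $\eta$, and sending $\epsilon\to0$ does not give $h_B(f,E)\le h_B(f,E\cap W^u_P(y))$. This is exactly why the clean identity $h_B(f,f^kA,\epsilon)=h_B(f,A,\epsilon)$ is available in Lemma \ref{holonomy} only because there $A$ lies inside a single unstable disk, where $f^{-1}$ is genuinely contracting; once $A$ has unstable thickness, the pullback loses a factor $C^k$ that is not recoverable.

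The paper's proof sidesteps this by never iterating the thickened set $E$. It works at time $0$: the only fact needed about the stable direction is that $d_n(p,q)\le d(p,q)$ when $p,q$ lie on the same stable leaf (forward iterates only contract that distance), so no iteration is required to control the Bowen distance along the stable fibers. To make the fattening small, instead of shrinking $\lambda^k\delta$ by iterating, the paper takes a $\delta$-dense finite set $\{x_1,\dots,x_d\}\subset W^s_P(x)$, covers each slice $E\cap W^u_P(x_j)$ cheaply at scale $\epsilon$ (here Lemma \ref{holonomy} --- where the iteration trick was legitimate --- is invoked to guarantee all these slices have Bowen entropy $\le h_B(f,E\cap W^u_P(y),\epsilon/2)$ uniformly), and fattens each cover by $s(\delta)=\sup d(h^s_{j,\cdot},\mathrm{Id})$, the holonomy distortion over distance $\delta$. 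Since $s(\delta)\to0$ as $\delta\to0$, the combined cover of $E$ has scale $\epsilon+s(\delta)$ with total cost $\le 1$, and the bound $h_B(f,E,2\epsilon)\le h_B(f,E\cap W^u_P(y),\epsilon/2)$ follows by letting $\delta\to0$ and then $t\downarrow h$. To repair your argument you would need this finite-net-plus-holonomy-distortion device rather than a forward iteration of the full set $E$.
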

\begin{proof}
Let $\epsilon>0$ and $y\in W^s_{loc}(x)$ be given. Clearly $E\cap
W^u_P(z)=h^s_{y,z}(E\cap W^u_P(z))$. Then by Lemma \ref{holonomy} we
see $h_B(f,E\cap W^u_P(z),\epsilon)\le h=h_B(f,E\cap
W^u_P(y),\epsilon/2)$ for every $z\in W^s_{loc}(x)$. Let $t>h$ be
fixed. Pick a $\delta$-dense subset $\{x_1,\cdots,x_d\}\subset
W^s_P(x)$. Then $h_B(f,E\cap W^u_P(x_j),\epsilon)<t$ for all
$j=1,\cdots,d$.

There exists $N=N_{\epsilon,d}\ge1$ such that for each $n\ge N$,
there exists a countable cover of $E\cap W^u_P(x_j)$, say
$\mathcal{C}_{j,n}=\{B(z_l^j,\epsilon,n_l^j): z_l^j\in E\cap
W^u_P(x_j), n_l^j\ge n\}$, such that
$$\sum_{l\ge1}e^{-t n_l^j}<M_n(f,E\cap
W^u_P(x_j),t,\epsilon)+\frac{1}{2d}<\frac{1}{d}.$$ For each $y\in
W^s_\epsilon(x_j)$, denote the stable holonomy by
$h^s_{j,y}:W^u_P(x_j)\to W^u_P(y)$ and $s(\delta)=d(h^s_{j,y},Id)$.
For each $q\in E\cap W^u_P(y)$ there exists $p\in E\cap W^u_P(x_j)$
with $h^s_{j,y}(p)=q$. Also $p\in B(z_l^j,\epsilon,n_l)$ for some
$l\ge1$. So
\[d_n(z_l^j,q)\le d_n(z_l^j,p)+d_n(p,q)\le s(\delta)+\epsilon.\]
So $\{B(z_l^j,s(\delta)+\epsilon,n_l^j):l\ge1\}$ covers $E\cap
W^u_P(W^s_\epsilon(x_j))$ for each $j=1,\cdots,d$. Therefore
$\{B(z_l^j,s(\delta)+\epsilon,n_l^j):l\ge1,1\le j\le d\}$ covers $E$
and
\[M_n(f,E,t,s(\delta)+\epsilon)\le
\sum_{j=1}^d\sum_{l\ge1}e^{-t n^j_l}\le\sum_{j=1}^d 1/d=1.\]

Passing $n$ to infinity, we see $M(f,E,t,s(\delta)+\epsilon)\le 1$
and hence $h_B(f,E,s(\delta)+\epsilon)\le t$ for every $\delta>0$.

Passing $\delta$ to zero, we see $s(\delta)<\epsilon$ and
$h_B(f,E,2\epsilon)\le t$. Then passing $t$ to $h$, we see
$h_B(f,E,2\epsilon)\le h=h_B(f,E\cap W^u_P(y),\epsilon/2)$. Clearly
$h_B(f,E,2\epsilon)\ge h_B(f,E\cap W^u_P(y),2\epsilon)$. Finally
passing $\epsilon$ to zero and applying Squeeze Theorem we get
$h_B(f,E)=h_B(f,E\cap W^u_P(y))$ for every $y\in W^s_{loc}(x)$. This
finishes the proof.
\end{proof}

\vskip.3cm

Now we give the proof of Proposition D.
\begin{proof}[Proof of Proposition D]
Let $f:M\to M$ be a transitive Anosov diffeomorphism. Note that $f$
satisfies {\it Specification Property}. Pfister and Sullivan proved
in \cite{PS} that $h_B(f,B(\mu,f))=h_\mu(f)$ for every $\mu\in
\mathcal{M}(f)$ (by Proposition 2.1 and Theorem 1.2 in there). Then
for the heteroclinic set $H_f(\mu,\nu)=B(\mu,f)\cap B(\nu,f^{-1})$,
\begin{equation}\label{lower}
h_B(f,H_f(\mu,\nu))\le h_B(f,B(\mu,f))=h_\mu(f).
\end{equation}

Then we cover $M$ by rectangles, say $\{P_1,\cdots,P_d\}$ and assume
$h_B(f,B(\mu,f)\cap P_i)=h_B(f,B(\mu,f))$ for some
$P_i=[W^s_{loc}(x_i),W^u_{loc}(x_i)]$. Note that $E=B(\mu,f)\cap
P_i$ is $W^s_{P_i}$-saturated. By Lemma \ref{rectangle} we see
$h_B(f,B(\mu,f)\cap P_i)=h_B(f,B(\mu,f)\cap W^u_{P_i}(y))$ for every
$y\in W^s_{loc}(x_i)$.

As observed in Remark \ref{large}, we know that $W^u_{P_i}(y)\subset
B(\nu,f^{-1})$ for some $y\in W^s_{loc}(x_i)$, since $B(\nu,f^{-1})$
is dense and $u$-saturated. So $B(\mu,f)\cap W^u_{P_i}(y)\subset
B(\mu,f)\cap B(\nu,f^{-1})=H_f(\mu,\nu)$. Then we have

\begin{align}\label{upper}
h_B(f,H_f(\mu,\nu))&\ge h_B(f,B(\mu,f)\cap W^u_{P_i}(y))\\
&=h_B(f,B(\mu,f)\cap P_i)=h_B(f,B(\mu,f)\cap P_i)=h_\mu(f).\nonumber
\end{align}
Combing \eqref{lower} and \eqref{upper}, we see
$h_B(f,H_f(\mu,\nu))=h_\mu(f)$.

Note that $H_{f^{-1}}(\nu,\mu)=H_f(\mu,\nu)$ and
$h_\nu(f)=h_\nu(f^{-1})$. Therefore
\[h_B(f^{-1},H_f(\mu,\nu))=h_B(f^{-1},H_{f^{-1}}(\nu,\mu))=h_\nu(f^{-1})=h_\nu(f).\]

So if $h_\mu(f)\neq h_\nu(f)$ for some $\mu,\nu\in\mathcal{M}(f)$,
then $h_B(f,H_f(\mu,\nu))\neq h_B(f^{-1},H_f(\mu,\nu))$. This
completes the proof.
\end{proof}

\end{document}